\newcommand{\ad}{\mathcal A}
\newcommand{\esp}{\vspace*{0.5cm}}
\newcommand{\esple}{\vspace*{0.3cm}}
\newcommand{\s}{\subseteq}
\newcommand{\meni}{\leqslant}
\newcommand{\bc}{\begin{center}}
\newcommand{\ec}{\end{center}}
\newcommand{\pe}{\langle}
\newcommand{\pd}{\rangle}
\newcommand{\cau}{\mathcal{U}}
\newcommand{\vaz}{\emptyset}
\newcommand{\mai}{\geqslant}
\newcommand{\fdl}{\hfill{$\square$}}
\newcommand{\n}{\noindent}
\newcommand\res{{\upharpoonright}}
\newcommand\R{{\mathbb{R}}}
\newcommand\nsa{{\mathfrak n}{\mathfrak s}{\mathfrak a}}
\newcommand\vsa{{\mathfrak v}{\mathfrak s}{\mathfrak a}}
\newcommand\vcp{{\mathfrak v}{\mathfrak c}{\mathfrak p}}
\newcommand\vssa{{\mathfrak v}{\mathfrak s}{\mathfrak s}{\mathfrak a}}
\newcommand\vn{{\mathfrak v}{\mathfrak n}}
\newcommand\nssa{{\mathfrak n}{\mathfrak s}{\mathfrak s}{\mathfrak a}}
\newcommand{\gc}{\mathfrak{c}}
\newcommand{\gp}{\mathfrak{p}}
\newcommand{\ga}{\mathfrak{a}}
\newcommand{\gd}{\mathfrak{d}}
\newcommand{\gb}{\mathfrak{b}}
\newcommand{\zfc}{\mathbf{ZFC}}
\newcommand{\ch}{\mathbf{CH}}
\newcommand{\psia}{\Psi(\mathcal A)}
\newcommand{\ptes}{\mathcal P}
\begin{document}

\title[Selectively $(a)$-spaces under a certain diamond principle]
{Selectively $(a)$-spaces from  \\  almost disjoint families \\ are necessarily countable \\ under a certain parametrized \\ weak diamond principle}
\author[Charles Morgan and Samuel G. da Silva]
{Charles J.G. Morgan and Samuel G. da Silva}
\address{Department of Mathematics, University College London, Gower Street,
London, WC1E 6BT, UK, and\hfill\break
\indent Centro de Matem\'atica e Aplica\c{c}\~oes Fundamentais,
Universidade de Lisboa, Avenida Professor Gama Pinto, 2, 1649-003
Lisboa, Portugal}
\email{charles.morgan@ucl.ac.uk}\
\address{Instituto de Matem\'atica, Universidade Federal da Bahia,
Campus de Ondina, Av. Adhemar de Barros, S/N, Ondina, CEP 40170-110,
Salvador, BA, Brazil}
\email{samuel@ufba.br}
\cyh

\thanks{The second (and corresponding) author's research was supported by PROPCI/UFBA -- Grant PROPI (Edital PROPCI-PROPG 05/2012 -- 
PROPI 2012).}

\keywords{almost disjoint families, star covering properties, property $(a)$, selection principles, selectively $(a)$, parametrized weak diamond principles.}

\subjclass[2000]{Primary 54D20, 54A25, 03E05; Secondary 54A35, 03E65, 03E17.}

\begin{abstract}

The second author has recently shown (\cite{recente}) that any selectively $(a)$ almost disjoint family  must have cardinality strictly less than $2^{\aleph_0}$, so under the Continuum Hypothesis such a family is necessarily countable. However, it is also shown in the same paper that $2^{\aleph_0} < 2^{\aleph_1}$ alone does not avoid the existence of uncountable selectively $(a)$  almost disjoint families. We show in this paper that a certain effective parametrized weak  diamond principle is enough to ensure countability of the almost disjoint family in this context. We also discuss the deductive strength of this specific  weak diamond principle  (which is consistent with the negation of the Continuum Hypothesis, apart from other features).

\end{abstract}

\maketitle



\newtheorem{fato}{Fact}
\newtheorem{que}{Question}
\newtheorem{prob}{Problem}



\section{Introduction}

In this paper   we work with a \emph{star selection principle}, namely the property of being a \emph{selectively $(a)$-space}. Star selection principles
combine ideas and techniques from both its constituent parts as topological topics: the \emph{star covering properties} and the \emph{selection principles}. These  topics
were the subject of dozens of papers over the last  years, and have attracted the attention of many strong researchers.
The reader will find background information on star covering properties in the papers \cite{DRRT} and \cite{survey}; for selection principles and topology, we refer to the papers \cite{scheep} and \cite{ljub2}.

Property $(a)$ -- a star covering property --  was introduced by Matveev in \cite{matv}, and its selective version
 was introduced by Caserta, Di Maio and Ko{\v{c}}inac in \cite{ljub}.

\esple

\begin{dfn} [{\cite{matv}}]
A topological space $X$ satisfies  {\bf Property $(a)$} (or is said to be
an {\bf $(a)$-space}) if  for every open cover
$\cau$ of  $X$ and for every dense set  $D \s X$ there is a set $F \s D$ which is closed and
discrete in $X$ and such that  $St(F, \cau) = X$ (where  $St(F, \cau) = \bigcup \{U
\in \cau: U \cap F \neq \vaz \}$).\end{dfn}

\esple

\begin{dfn} [{\cite{ljub}}]
A topological space $X$ is said to be a {\bf selectively $(a)$-space} if for every sequence $\pe \mathcal{U}_n : n < \omega\pd$ of open covers and for every dense set $D \s X$ there is a sequence $\pe A_n: n < \omega\pd$ of subsets of $D$ which are closed and discrete
in $X$ and such that $\{St(A_n, \mathcal{U}_n): n < \omega\}$ covers $X$. \end{dfn}

\esple

Notice that $(a)$ implies selectively $(a)$. The Deleted Tychonoff plank is an example of a selectively $(a)$-space which does not satisfy Property $(a)$ (Example 2.6 of \cite{song}).

In \cite{recente}, the second author has investigated the presence of the selective version of property $(a)$ in a certain class of topological spaces,
the  \emph{Mr\'owka-Isbell spaces from almost disjoint families}, 
and, as expected, many aspects of such presence within this class have combinatorial characterizations or, at least, are closely related to combinatorial and set-theoretical hypotheses. Let us recall how such spaces are constructed. A set $\ad$ of infinite subsets of the set $\omega$ of all natural numbers  is said to be an \emph{almost disjoint }
(or \emph{a.d.}) \emph{family} if for every pair of distinct $X$, $Y \in \ad$  its intersection  $X \cap Y$ is finite. 
For any a.d. family $\ad$ one may
consider the corresponding $\Psi$-\emph{space}, $\psia$, whose underlying set
is given by $\ad \cup \omega$. The points in $\omega$ are declared
isolated and the basic neighbourhoods of a point $A \in \ad$ are given
by the sets $\{A\} \cup (A \setminus F)$ --  for  $F$ varying on the family of all finite subsets of $\omega$. Notice that  $\omega$ is a dense set of isolated points, $\ad$ is
a closed and discrete subset of $\psia$ and basic neighbourhoods of points in  $\ad$ are compact. 
Such spaces are precisely characterized by a specific list of  topological properties; more precisely,  any Hausdorff, first countable, locally compact separable space whose set of non-isolated points is non-empty and
discrete is homeomorphic to a $\Psi$-space (see \cite{vD},
p.154).

Throughout this paper, $\ad$ will always denote an a.d. family of infinite subsets of $\omega$.  $\ad$ will be said to be a \emph{selectively $(a)$ a.d. family} if the corresponding space $\psia$ is selectively $(a)$. In general, for any topological property $\ptes$ we will say that $\ad$ satisfies $\ptes$ in the case of $\psia$ satisfying $\ptes$.

\esple

The following theorem is a general result on selectively $(a)$-spaces (not only for those from almost disjoint families) and was established by
the second author in \cite{recente}.  Recall that the {\em density} of a topological space $X$, $d(X)$, is the minimum of the cardinalities of all dense subsets of $X$, provided this is an infinite cardinal, or is $\omega = \aleph_0$ otherwise.

\begin{thm} [{\cite{recente}}] \label{matveevseletivo} If $X$ is a selectively $(a)$-space and $H$ is a closed and discrete
subset of $X$, then   $|H| < 2^{d(X)}$.
\end{thm}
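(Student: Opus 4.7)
The plan is to argue by contradiction, enumerating all countable sequences of closed discrete subsets contained in a fixed minimum-size dense set and diagonalizing against the selectively $(a)$ witness. First I would fix a dense $D \s X$ with $|D| = \kappa = d(X)$, suppose $|H| \mai 2^\kappa$, and make the preliminary reduction to the case $H \cap D = \vaz$ (which is valid because $|H \cap D| \meni \kappa < 2^\kappa$, so $H \setminus D$ still has size $\mai 2^\kappa$ and remains closed and discrete in $X$: the discreteness of $H$ provides, for every $h' \in H \cap D$, an open neighbourhood of $h'$ meeting $H$, hence meeting $H \setminus D$, only at points of $H$, which forces $h' \notin \overline{H \setminus D}$). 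A closed-discrete-in-$X$ subset of $D$ is in particular a subset of $D$, so there are at most $(2^\kappa)^\omega = 2^\kappa$ countable sequences of such sets. I would enumerate these as $\langle \langle A^\alpha_n : n < \omega\rangle : \alpha < 2^\kappa \rangle$ and pick an injective indexing $\{h_\alpha : \alpha < 2^\kappa\} \s H$.

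Next I would diagonalize the open neighbourhoods. For each $\alpha < 2^\kappa$, the discreteness of $H$ yields an open $W_\alpha$ with $W_\alpha \cap H = \{h_\alpha\}$; since $h_\alpha \notin D \supseteq A^\alpha_n$ and $A^\alpha_n$ is closed, the set $V^\alpha_n := W_\alpha \setminus A^\alpha_n$ is open, contains $h_\alpha$, meets $H$ only at $h_\alpha$, and is disjoint from $A^\alpha_n$. For the remaining $h \in H \setminus \{h_\alpha : \alpha < 2^\kappa\}$ I would fix any isolating open neighbourhood $W_h$ and, for each $n < \omega$, form the open cover
\[
\cau_n = \{X \setminus H\} \cup \{V^\alpha_n : \alpha < 2^\kappa\} \cup \{W_h : h \in H \setminus \{h_\alpha : \alpha < 2^\kappa\}\}.
\]

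Finally, applying selectively $(a)$ to $\langle \cau_n : n < \omega\rangle$ and the dense set $D$ produces closed discrete $B_n \s D$ with $\bigcup_{n<\omega} St(B_n, \cau_n) = X$. Since $\langle B_n : n < \omega\rangle$ belongs to the enumeration, say $\langle B_n : n < \omega\rangle = \langle A^\beta_n : n < \omega\rangle$ for some $\beta < 2^\kappa$, there must be some $n$ with $h_\beta \in St(B_n, \cau_n)$; however, the unique member of $\cau_n$ containing $h_\beta$ is $V^\beta_n$ (any other $V^\alpha_n$ meets $H$ only at $h_\alpha \neq h_\beta$, and neither $X \setminus H$ nor the $W_h$ with $h \neq h_\beta$ contain $h_\beta$), so $V^\beta_n \cap A^\beta_n = V^\beta_n \cap B_n \neq \vaz$, contradicting the construction. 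The main obstacle I expect is arranging the diagonal neighbourhood $V^\alpha_n$ so that it simultaneously isolates $h_\alpha$ inside $H$ and misses the closed set $A^\alpha_n$; the preliminary reduction $H \cap D = \vaz$ is exactly what makes this routine, since each $h_\alpha$ then lies strictly outside $A^\alpha_n$ and can be separated from it by subtracting $A^\alpha_n$ from an isolating neighbourhood.
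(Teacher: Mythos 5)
Your proof is correct: the reduction to $H\cap D=\vaz$, the enumeration of all countable sequences of closed discrete subsets of $D$ (of which there are at most $(2^{d(X)})^{\omega}=2^{d(X)}$), and the diagonal covers $\cau_n$ built from isolating neighbourhoods with $A^\alpha_n$ removed all work as you describe, and the star of $B_n$ then misses $h_\beta$ as required. The present paper only cites this theorem from \cite{recente} without reproving it, but your argument is exactly the expected one --- the selective analogue of Matveev's $(a)$-version of Jones' Lemma via diagonalization over candidate witnesses.
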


As an immediate consequence for $\Psi$-spaces, if $\psia$ is a selectively $(a)$-space then $|\ad| < \gc$. It follows that under the Continuum Hypothesis selectively $(a)$ a.d. families are necessarily countable.

However, it is also shown in \cite{recente} that $2^{\aleph_0} < 2^{\aleph_1}$ alone does not avoid the existence of uncountable selectively $(a)$  almost disjoint families. Our main goal  in this paper is to show that a certain effective parametrized weak  diamond principle is enough to ensure countability of the almost disjoint family in this  context.

Our set-theoretical notation and terminology are  standard. In what follows, $\omega = \aleph_0$ denotes the set of all natural numbers (and the least infinite cardinal). $[\omega]^{\omega}$ and $[\omega]^{< \omega}$
denote, respectively, the family of all infinite subsets of $\omega$ and
the family of all finite subsets of $\omega$. The first uncountable cardinal is denoted by $\omega_1 = \aleph_1$. 
For a given set $X$, $|X|$ denotes the
cardinality of $X$. $\mathbf{CH}$ denotes the \emph{Continuum Hypothesis}, which is the statement ``$\gc = \aleph_1$", where $\gc$ is the \emph{cardinality of the continuum}, i.e., $\gc = |\mathbb{R}| = 2^{\aleph_0}$. The \emph{Generalized Continuum Hypothesis} (denoted by $\mathbf{GCH}$) is the statement
``$\aleph_{\alpha + 1} = 2^{\aleph_\alpha}$ for every ordinal $\alpha$''.
A \emph{stationary} subset of $\omega_1$ is a subset of $\omega_1$ which intersects all \emph{club} (closed, unbounded) subsets of $\omega_1$ (where ``closed'' means ``closed in the order topology''). 
\emph{Jensen's Diamond}, denoted by $\diamondsuit$,  is the combinatorial \emph{guessing principle}
asserting the existence of a $\diamondsuit$-sequence, which is a sequence $\pe A_\alpha: \alpha < \omega_1 \pd$ such that $(i)$ $A_\alpha \s \alpha$ for every $\alpha < \omega_1$;  and $(ii)$ the following property holds: for any given set $A \s \omega_1$, the $\diamondsuit$-sequence ``guesses'' $A$ stationarily many times, meaning that $\{\alpha < \omega_1: A \cap \alpha = A_\alpha\}$ is stationary. It is easy to see that $\diamondsuit \rightarrow \ch \rightarrow 2^{\aleph_0} < 2^{\aleph_1}$.

For small uncountable cardinals like $\ga$,  $\gb$, $\gp$ and $\gd$, see \cite{vD}.

Let us describe the organization of this paper. In Section 2 we discuss the deductive strength of a certain effective combinatorial principle,
 namely the principle $\diamondsuit(^\omega \omega,<)$. In order to do that, we present a quick review of the  parametrized weak diamond principles of Moore, Hru\v s\'ak and D\v zamonja (\cite{mhd}). The treatment we give is very similar to the one we have done  for $\diamondsuit(\omega, <)$ in \cite{tchecos}. 
 In Section 3 we present our main theorem: the principle  $\diamondsuit(^\omega \omega,<)$ implies that 
selectively $(a)$ a.d. families are necessarily countable. In Section 4 we justify why all functions in this paper are Borel and present some routine verifications. In Section 5 we present some notes, problems and questions on certain cardinal invariants related to the subject. 

\esp

\section{The effective combinatorial principle $\diamondsuit(^\omega \omega, <)$}

\esp

In  this paper we work with specific instances of \emph{effective} (meaning, Borel) versions of certain combinatorial principles, the so-called 
\emph{parametri\-zed weak diamond principles}
introduced by Moore, Hru\v s\'ak and D\v zamonja in \cite{mhd}.

The family of parameters for the weak diamond principles of \cite{mhd} is given by the category 
$\mathcal{PV}$. This category (named after de Paiva (\cite{val1}) and
Vojt{\'a}{\v{s}} (\cite{voj}), its introducers)   has proven itself useful in several (and quite distant between each other) fields as: linear logic; the study of cardinal 
invariants of the continuum; and complexity theory (see Blass' survey \cite{bla} for more information on this category and its surprising applications). $\mathcal{PV}$ is a small subcategory of the dual of the simplest example of a Dialectica category, 
$\hbox{\bf Dial}_2(\hbox{\bf Sets})$(\cite{val2}).

The {\it objects\/} of $\mathcal{PV}$ are triples $o=(A,B,E)$ consisting of 
sets $A$ and $B$, both of size not larger than $\gc$, and a relation $E\subseteq A\times B$ such that

\bc 

$\forall a\in A\;\;\exists b\in B\; \; a\,E\,b\hbox{ and }\; 
\forall b\in B\;\;\exists a\in A\;\; \neg\, a\,E\,b.$ \ec

$(\phi,\psi)$ is a {\it morphism\/} from $o_2$ $=(A_2,B_2,E_2)$, to 
$o_1$ $=(A_1,B_1,E_1)$, if 
$\phi:A_1\rightarrow A_2$, $\psi:B_2\rightarrow B_1$ and 
$$\forall a\in A_1\;\;\forall b\in B_2 \;\; \phi(a)\, E_2\, b \;\rightarrow\; 
a\, E_1\, \psi(b).$$

The category is partially ordered in the following way: 
$o_1\meni_{GT} o_2$ if there is a morphism from $o_2$ to $o_1$. 
Two objects are Galois-Tukey equivalent, $o_1\sim_{GT} o_2$, if 
$o_1\meni_{GT} o_2$ and $o_2\meni_{GT} o_1$.

Given $o = (A,B,E) \in \mathcal{PV}$, the associated  parametrized weak diamond principle $\Phi(A,B,E)$ corresponds to the following combinatorial 
statement (a typical guessing principle)
(\cite{mhd}):

\esple

``For every function $F$ with values in $A$, defined on the binary tree of height $\omega_1$, there is a function $g: \omega_1 \rightarrow B$ such that $g$ `guesses' every branch of the tree, meaning that for all $f \in$  $^{\omega_1} 2$ the set given by $\{\alpha < \omega_1: F(f \res \alpha) E g(\alpha) \}$ is stationary."

\esple

The function $g$ is sometimes called ``an \emph{oracle} for $F$, given by the principle $\Phi(A,B,E)$''.

In  case of $A = B$, we denote $\Phi(A,B,E)$ as $\Phi(A,E)$.

\esple

The following facts, some of them immediate consequences of the definitions,  are either proved or referred to a proof in \cite{mhd}:

\begin{itemize}

\item $(\R,\R,\ne)$ and  $(\R,\R,=)$ are, respectively,  minimal
and maximal elements of $\mathcal{PV}$ with respect to  $\meni_{GT}$.
\esple

\item If $o_1\meni_{GT} o_2$ then $\Phi(o_2)$
implies $\Phi(o_1)$. So, if one has  $o_1\sim_{GT}
o_2$ then it follows that  $\Phi(o_1)\longleftrightarrow \Phi(o_2)$.
\esple

\item  $\diamondsuit  \leftrightarrow \; \Phi(\R,=)$.
\esple

\item  $\Phi(\R,\neq ) \leftrightarrow \Phi(2,\neq)$. (Abraham, unpublished)
\esple

\item $\Phi(2,\neq) \leftrightarrow \Phi(2,=)$. (this one is obvious)
\esple

\item  $\Phi(2,=) \leftrightarrow 2^{\aleph_0} < 2^{\aleph_1}$ (Devlin, Shelah (\cite{devshe}))

\end{itemize}

\esple

As consequences of the listed facts, notice that for any object $o \in \mathcal{PV}$ the following implications hold:

\bc $\diamondsuit \rightarrow \Phi(o) \rightarrow  2^{\aleph_0} < 2^{\aleph_1}$. \ec

The preceding implications justify the terminology ``weak diamond"\, for these guessing principles -- the cardinal inequality $2^{\aleph_0} < 2^{\aleph_1}$ being the weakest diamond of all. 

Now we turn our interest to effective versions of parametrized weak diamond principles; such effective versions are much more flexible (in the sense that they may hold in much more models, including models of  $2^{\aleph_0} = 2^{\aleph_1}$). Recall that a \emph{Polish space} is a
separable and completely metrizable topological space. A subset of a Polish space is \emph{Borel} if it belongs to the smallest $\sigma$-algebra containing all open subsets of the Polish space.

\esple

\begin{dfn} \label{borel}

$(i)$ An object $o = (A,B,E)$ in $\mathcal{PV}$ is {\bf Borel\/} if $A$, $B$ and $E$ 
are Borel subsets of some Polish space. 

\esple

$(ii)$ A map $f:X\longrightarrow Y$ from a Borel subset of a Polish space to a
Borel subset of another is itself\,  {\bf Borel\/} if for every Borel 
$Z\subseteq Y$ one has that $f^{-1}[Z] \s X$ is Borel. 

\esple

$(iii)$ If $o_1$ and $o_2$ are both Borel then $o_1\meni^B_{GT} o_2$ if there is a
morphism from $o_2$ to $o_1$ with both of its constituent maps Borel,
and $o_1\sim^B_{GT} o_2$ if $o_1\meni^B_{GT} o_2$ and $o_2\meni^B_{GT} o_1$. 

\esple

$(iv)$ A map $F:\,^{<\omega_1}2\longrightarrow A$ is {\bf Borel\/} if it is
level-by-level Borel: {\it i.e.\/}, if for each $\alpha<\omega_1$ the
map $F\res ^{\alpha}2:\,^{\alpha}2\longrightarrow A$ is Borel. 

\esple

$(v)$ If $o$ is Borel we define the principle $\diamondsuit(o)$ as in \cite{mhd}:

\esple

\hspace*{2.5cm} $\forall \hbox{ Borel } F:^{<\omega_1}2\rightarrow A\;\;\exists g\in {}^{\omega_1}B\;\;
\forall f\in {}^{\omega_1}2$

\hspace*{2.5cm} $\{\alpha<\omega_1: F(f\res \alpha) \,E
\, g(\alpha)\}\hbox{ is stationary in }\omega_1.$

\end{dfn}
\esple

As expected, for Borel parametrized diamond principles we also have that if $o_1$, $o_2$ are both Borel and $o_1\meni^B_{GT} o_2$ then
$\diamondsuit(o_2)\longrightarrow\diamondsuit(o_1)$, and, consequently, if $o_1\sim^B_{GT}
o_2$ we have $\diamondsuit(o_2)\longleftrightarrow\diamondsuit(o_1)$. 

As an application of the remark of the preceding paragraph, one has that $\diamondsuit$ is, in fact, equivalent to the effective principle
$\diamondsuit(\R,=)$ (Proposition 4.5 of \cite{mhd}); one has just to check that the constituent maps of the known morphisms (in both
directions)  are 
Borel. We will also use the remark of the preceding paragraph to  show, in a little while, that 

\bc $\diamondsuit(^\omega \omega,<) \longleftrightarrow \diamondsuit(^\omega (\ptes (\omega)),$  $^\omega \omega, E)$, \ec

\n but we have to first define precisely these objects.

\esple

\begin{dfn} $(i)$ The object $(^\omega \omega,$ $^\omega \omega\,, <)$ is defined in the following way: for every $f, g \in$ $^\omega \omega$,
$f < g$ if, and only if, $f(n) < g(n)$ for every $n < \omega$.

$(ii)$ The object $(^\omega (\ptes (\omega)),$  $^\omega \omega, E)$ is defined in the following way: for every $\xi \in$ $^\omega(\ptes(\omega))$ and for every $g \in$ $^\omega \omega$, $\xi E g$ if, and only if, 

\bc $[ \exists m < \omega \,\, (\xi(m) = |\aleph_0|)] \vee [\forall n < \omega\,\, (\xi(n) \subsetneq g(n))]$ \ec
\end{dfn}

\esple

\begin{prop} \label{equivalencia} $\diamondsuit(^\omega \omega,<) \longleftrightarrow \diamondsuit(^\omega (\ptes (\omega)),$  $^\omega \omega, E)$.
\end{prop}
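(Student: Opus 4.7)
The plan is to show that $o_1 = (^\omega\omega, {}^\omega\omega, <)$ and $o_2 = (^\omega(\ptes(\omega)), {}^\omega\omega, E)$ are Borel Galois--Tukey equivalent, and then to invoke the remark immediately following Definition \ref{borel}, which gives $\diamondsuit(o_1)\longleftrightarrow\diamondsuit(o_2)$. So the whole argument reduces to exhibiting Borel morphisms in both directions.

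For a morphism $(\phi, \psi)$ from $o_2$ to $o_1$ (yielding $o_1 \meni^B_{GT} o_2$), I would take $\phi : {}^\omega\omega \to {}^\omega(\ptes(\omega))$ defined by $\phi(f)(n) = \{0, 1, \ldots, f(n) - 1\}$ (that is, $f(n)$ regarded as a von Neumann ordinal) together with $\psi = \mathrm{id}$. Each $\phi(f)(n)$ is finite, so the first disjunct of the defining relation of $E$ can never fire; hence $\phi(f)\, E\, g$ is equivalent to $\{0, \ldots, f(n) - 1\} \subsetneq \{0, \ldots, g(n) - 1\}$ for every $n$, which in turn is equivalent to $f<g=\psi(g)$. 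Both constituents are continuous, so Borel.

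For a morphism from $o_1$ to $o_2$ (yielding $o_2 \meni^B_{GT} o_1$), I would keep $\psi$ the identity and define $\phi : {}^\omega(\ptes(\omega)) \to {}^\omega\omega$ by a three-way case split: $\phi(\xi)(n) = \max \xi(n) + 2$ if $\xi(n)$ is finite and nonempty, $\phi(\xi)(n) = 1$ if $\xi(n) = \vaz$, and $\phi(\xi)(n) = 0$ if $\xi(n)$ is infinite. Assume $\phi(\xi) < g$: if some $\xi(m)$ is infinite then $\xi\, E\, g$ is immediate from the first disjunct; otherwise every $\xi(n)$ is finite and, in either remaining sub-case, the inequality $\phi(\xi)(n) < g(n)$ is engineered to force $\xi(n) \subsetneq \{0, \ldots, g(n) - 1\}$, so the second disjunct is satisfied. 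The ``$+2$'' (rather than a ``$+1$'') is exactly what enforces the \emph{strict} inclusion.

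The only point that calls for a separate check is the Borel-ness of $\phi$ in this second morphism, and it is the kind of routine verification naturally deferred to Section 4: the sets $\{\xi : \xi(n) \text{ is finite}\}$ and $\{\xi : \xi(n) = \vaz\}$ are Borel in the natural Polish topology of ${}^\omega(2^\omega)$, and $A \mapsto \max A$ is Borel on the Borel set of nonempty finite subsets of $\omega$; pasting these together gives Borel-ness of $\phi$. I do not foresee any essential obstacle; the single delicate point is the strictness ``$\subsetneq$'' in the definition of $E$, which the uniform offset handles.
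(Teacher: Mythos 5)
Your proof is correct and follows essentially the same route as the paper: your first morphism is exactly the paper's inclusion map (a natural number viewed as the von Neumann ordinal $\{0,\dots,f(n)-1\}$) with $\psi=\mathrm{id}$, and your second is the paper's map $\xi\mapsto(\max(\xi(n))+1)_n$ up to a harmless offset and an explicit case for $\xi(n)=\emptyset$ (which the paper's formula technically leaves undefined, so your case split is if anything a small improvement). One quibble: the ``$+2$'' is not what enforces strictness --- since the relation $<$ on ${}^\omega\omega$ is strict in every coordinate, $\max(\xi(n))+1<g(n)$ already yields $\xi(n)\subseteq\{0,\dots,g(n)-2\}\subsetneq\{0,\dots,g(n)-1\}$, so the paper's ``$+1$'' suffices.
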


\begin{proof} It suffices to show that $(^\omega \omega,$ $^\omega \omega\,, <) \sim^B_{GT} \,\,(^\omega (\ptes (\omega)),$  $^\omega \omega, E)$, i.e., we have to exhibit morphisms between those objects, in both directions, with all constituent maps Borel. 

\esple

\n \emph{Proof of} $(^\omega \omega,$$ ^\omega \omega, <) \meni^B_{GT} (^\omega (\ptes (\omega)),$  $^\omega \omega, E)$: Let $\phi$  be the inclusion map  

\bc $i:$$\,\,^\omega \omega \rightarrow$$ \,\, ^\omega (\ptes (\omega))$; \ec

\n recall that a sequence of natural numbers is also a sequence of subsets of $\omega$. Let $\psi$  be the identity map. If $\phi(f) E g$ then $f(n)$ is a proper subset of $g(n)$ for every $n < \omega$, and therefore $f < g$.

\esple

\n \emph{Proof of} $(^\omega (\ptes (\omega)),$  $^\omega \omega, E) \meni^B_{GT} \,(^\omega \omega,$$^\omega \omega, <)$: Let $\phi:$ $\, ^\omega (\ptes (\omega)) \rightarrow$$\,^\omega \omega$ be defined as follows: for every sequence  $\xi$ of subsets of $\omega$, say $\xi = \pe \xi(n): n < \omega\pd$, let $\phi(\xi): \omega \rightarrow \omega$ be such that, for every $m < \omega$,

\begin{center}

$ \phi(\xi)(m)= 
\left\{ 
\begin{array}{ll} 
\textrm{max}(\xi(m)) + 1 & \mbox {if the set $\xi(m)$ is finite; and} \\ 
0 & \mbox {otherwise.} \end{array} \right. $

\end{center}

Let $\psi:$$\,\,^\omega \omega \rightarrow$$ \,\, ^\omega \omega$ be the identity map. If $\phi(\xi) < g$ and there is $m < \omega$ such that $\xi(m)$ is infinite, then we have $\xi E g$ as desired. Otherwise,  for every $n < \omega$ the set $\xi(n)$ is finite and $\xi(n) \subsetneq g(n)$, and in this case we also have $\xi E g$. \end{proof}

\esple

We close this section by discussing the deductive strength of the effective diamond principle $\diamondsuit(^\omega \omega,<)$. 
We present a number of results -- and these results turn out to be very similar 
to those  we proved in \cite{tchecos} for $\diamondsuit(\omega,<)$.
First of all, we point out that the objects $(\omega,\omega,<)$ and $(^\omega \omega,$ $^\omega \omega, <)$ are comparable in the category order, with a morphism constituted of Borel maps; this is Lemma 3.4 of \cite{tchecos}. For the sake of completeness, we repeat below the proof:

\esple

\begin{fato}$ (\omega,\omega,<) \meni^B_{GT} (^\omega \omega,$ $ ^\omega \omega, <)$

\end{fato}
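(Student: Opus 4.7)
The plan is to produce a pair of Borel maps which together form a morphism from $(^\omega\omega, \,^\omega\omega, <)$ to $(\omega,\omega,<)$, since that is what the inequality $\meni^B_{GT}$ asks for. The natural candidates arise from the two most obvious ways of moving between $\omega$ and $^\omega\omega$: embedding via constant sequences, and projecting via evaluation at a coordinate.

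More precisely, I would define $\phi:\omega\to \,^\omega\omega$ by letting $\phi(n)$ be the constant sequence with value $n$, i.e.\ $\phi(n)(k)=n$ for every $k<\omega$, and define $\psi: \,^\omega\omega\to\omega$ by $\psi(g)=g(0)$. Both maps are continuous with respect to the standard Polish topologies (discrete on $\omega$, product of discrete on $^\omega\omega$), and in particular Borel, which is all that is required by Definition \ref{borel}.

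It remains to verify the morphism condition, namely that for all $n\in\omega$ and $g\in {}^\omega\omega$, if $\phi(n)<g$ pointwise then $n<\psi(g)$. But if $\phi(n)(k)<g(k)$ for every $k<\omega$, then in particular at $k=0$ we obtain $n=\phi(n)(0)<g(0)=\psi(g)$, as desired.

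This is essentially a one-line verification, so I do not anticipate any real obstacle; the only thing to be careful about is which direction of the Galois--Tukey inequality the statement encodes (a morphism from $o_2$ to $o_1$ witnesses $o_1 \meni_{GT} o_2$), and that both constituent maps live in Polish spaces in the obvious way so Borelness is automatic.
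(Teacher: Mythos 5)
Your proof is correct and follows essentially the same route as the paper: the paper also takes $\phi(n)$ to be the constant function with value $n$, and only differs in choosing $\psi(g)=\min(\mathrm{im}(g))$ rather than $\psi(g)=g(0)$, both of which are Borel and make the morphism condition an immediate one-line check. Your choice of $\psi$ is, if anything, marginally simpler since evaluation at a coordinate is continuous.
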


\esple

\begin{proof} Let $\phi: \omega \rightarrow$ $^\omega \omega$  be such that, for every $n < \omega$,  $\phi(n)$ is the constant function of value $n$, and $\psi:$ $ ^\omega \omega \rightarrow \omega$ be such that, for every $f: \omega \rightarrow  \omega$, $\psi(f) = min(im(f))$. If $\phi(n) < g$ then all values of $g$ are greater than $n$ and therefore $n < \psi(g)$. \end{proof}

\esple

If follows that $\diamondsuit(^\omega \omega, <) \rightarrow \diamondsuit(\omega,<)$. In Corollary 3.8 of \cite{tchecos} 
it is shown (using results on \emph{uniformizing colourings of ladder systems}, \cite{ladder}) that $\ch$ does not imply $\diamondsuit(\omega, <)$, and therefore we have the  following:

\esple

\begin{fato} $\ch$ does not imply $\diamondsuit(^\omega\omega, <)$. \fdl

\end{fato}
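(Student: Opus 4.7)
The plan is to derive this Fact directly from the preceding Fact together with the cited Corollary 3.8 of \cite{tchecos}, using the monotonicity property of the effective diamond principles with respect to $\meni^B_{GT}$.

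First, I would invoke the preceding Fact, which establishes $(\omega,\omega,<) \meni^B_{GT} (^\omega \omega,\,^\omega \omega, <)$. By the general monotonicity remark stated right after Definition \ref{borel} (namely, that if $o_1 \meni^B_{GT} o_2$ then $\diamondsuit(o_2)$ implies $\diamondsuit(o_1)$), this order relation immediately yields the implication
\[
\diamondsuit(^\omega \omega, <) \;\rightarrow\; \diamondsuit(\omega, <).
\]

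Next, I would quote Corollary 3.8 of \cite{tchecos}: there it is shown (via the analysis of uniformizing colourings of ladder systems from \cite{ladder}) that $\ch$ does not imply $\diamondsuit(\omega, <)$, i.e., there is a model of $\zfc + \ch$ in which $\diamondsuit(\omega, <)$ fails. In that same model, the stronger principle $\diamondsuit(^\omega \omega, <)$ must also fail, for otherwise the implication just derived would force $\diamondsuit(\omega, <)$ to hold, contradicting the choice of the model. Hence $\ch$ does not imply $\diamondsuit(^\omega \omega, <)$.

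There is no real obstacle here: the entire argument is a two-line contrapositive once the $\meni^B_{GT}$-comparison between the two objects is in place and once the corresponding fact for $\diamondsuit(\omega,<)$ is cited. The only thing worth double-checking is that the morphism exhibited in the previous Fact is genuinely Borel (which it is, since both constituent maps there are either the evaluation at a constant function or the minimum-of-the-image, both trivially Borel), so that the monotonicity principle applies in its effective form.
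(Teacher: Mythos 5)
Your argument is correct and is exactly the paper's own: the preceding Fact gives $(\omega,\omega,<) \meni^B_{GT} ({}^\omega \omega,{}^\omega \omega, <)$, hence $\diamondsuit({}^\omega \omega, <) \rightarrow \diamondsuit(\omega,<)$, and combining this with Corollary 3.8 of \cite{tchecos} yields the Fact by contraposition. Nothing is missing.
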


\esple

In other words, $\diamondsuit(^\omega\omega, <)$ is independent of $\ch$ (notice that $\ch$ together with  all weak diamonds hold in models of $\diamondsuit$, for instance under the Axiom of Constructibility). It is a little more complicated to show that the same happens in the other way round, i.e., to show that $\ch$ is independent of $\diamondsuit(^\omega\omega, <)$. 
For this, we will need a very powerful result of \cite{mhd}: in Theorem 6.6 of the referred paper, the authors have shown that $\diamondsuit(A,B,E)$ holds for a large number of models of $\pe A, B, E \pd = \aleph_1$, where $\pe A,B,E \pd$ denotes the  \emph{evaluation} of $(A,B,E)$, defined as

\bc
$\pe A,B,E \pd = \textrm{min}\{|X|: X \s B \,\,\textrm{and} \,\, \forall a \in A \,\,  \exists b \in X \,\, [a E b]\}$
\ec

More precisely, it is shown in Theorem 6.6 of \cite{mhd} that a countable support iteration of length $\omega_2$ of certain forcings (which are compositions of  \emph{Borel partial orders}) forces $\diamondsuit(A,B,E)$ to hold if, and only if, it forces $\pe A,B,E \pd \meni \aleph_1$. An iteration of Sacks forcings satisfies the  hypothesis of this theorem, and it is well-known  that $\gd = \aleph_1$ in the iterated Sacks model (see, e.g., \cite{blahand}). Clearly,   the evaluation of $(^\omega \omega,$$^\omega \omega, <)$ is the dominating number $\gd$. Putting all pieces together, it  follows from all referred results of \cite{mhd} that in the countable support  iteration of length   $\omega_2$ of   Sacks forcings one has $2^{\aleph_0} = 2^{\aleph_1} = \aleph_2$ and $\diamondsuit(^\omega\omega,<)$ holds.

It is also possible to exhibit a model of $\diamondsuit(^\omega \omega, <)$ in which the ``weak diamond'' $2^{\aleph_0} < 2^{\aleph_1}$ is valid; for that, it suffices to proceed as in Proposition 3.10 of \cite{tchecos} (which is a simple  modification of Proposition 6.1 of \cite{mhd}) and get a Cohen model where $\aleph_{\omega_1}$ Cohen reals are added to a model of $\mathbf{GCH}$. Summing up, we have the following:

\esple

\begin{fato} $\diamondsuit(^\omega \omega, <)$ is consistent with $\neg \ch$, regardless of the validity of the  weak diamond $2^{\aleph_0} < 2^{\aleph_1}$. \fdl

\end{fato}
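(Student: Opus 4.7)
The plan is to exhibit two separate models witnessing the two sub-claims: one in which $\neg\ch$ holds together with $2^{\aleph_0} = 2^{\aleph_1}$, and another in which $\neg\ch$ holds together with $2^{\aleph_0} < 2^{\aleph_1}$; in both $\diamondsuit({}^\omega \omega, <)$ will be forced to hold.

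For the first model, I would start from a ground model of $\mathbf{GCH}$ and perform a countable support iteration $\mathbb{P}_{\omega_2}$ of length $\omega_2$ of Sacks forcing. The standard theory of the iterated Sacks model gives $2^{\aleph_0} = 2^{\aleph_1} = \aleph_2$ (so in particular $\neg\ch$), together with $\gd = \aleph_1$ (see \cite{blahand}). The key step is then to invoke Theorem~6.6 of \cite{mhd}: Sacks forcing is (a composition of) Borel partial orders, so the hypothesis of that theorem is met, and the conclusion is that the iteration forces $\diamondsuit(A,B,E)$ whenever it forces $\pe A,B,E\pd \meni \aleph_1$. Since the evaluation of $({}^\omega\omega, {}^\omega\omega, <)$ is precisely $\gd$, and we have just noted $\gd = \aleph_1$ in the extension, it follows that $\diamondsuit({}^\omega\omega, <)$ holds there.

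For the second model, I would proceed as in Proposition~3.10 of \cite{tchecos}, which is itself a straightforward modification of Proposition~6.1 of \cite{mhd}: starting again from a model of $\mathbf{GCH}$, add $\aleph_{\omega_1}$ Cohen reals. By the usual computation, the resulting Cohen extension satisfies $2^{\aleph_0} = \aleph_{\omega_1} < \aleph_{\omega_1 + 1} = 2^{\aleph_1}$, so both $\neg \ch$ and the ``weak diamond'' inequality hold. The argument of the cited Propositions, which establishes $\diamondsuit({}^\omega\omega, <)$ in the Cohen extension, goes through unchanged once one checks that the parameter $({}^\omega\omega,{}^\omega\omega,<)$ fits the setting they treat; this reduces to verifying that the relation $<$ is Borel as a subset of ${}^\omega\omega \times {}^\omega\omega$, which is immediate.

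The main obstacle, in both cases, is the bookkeeping needed to apply the quoted results of \cite{mhd} and \cite{tchecos} verbatim: one must confirm that the Sacks iteration meets the technical hypotheses on compositions of Borel partial orders required by Theorem~6.6 of \cite{mhd}, and that the Cohen-model construction of Proposition~6.1 of \cite{mhd} really does accept $({}^\omega\omega,{}^\omega\omega,<)$ as a parameter. Both verifications are routine given the material already developed in those papers, so no new ideas are needed beyond stringing the pieces together.
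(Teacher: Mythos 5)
Your proposal is correct and follows essentially the same route as the paper: the iterated Sacks model (via Theorem 6.6 of \cite{mhd} and $\gd=\aleph_1$ there) for the case $2^{\aleph_0}=2^{\aleph_1}$, and the Cohen model adding $\aleph_{\omega_1}$ Cohen reals over a model of $\mathbf{GCH}$ (via Proposition 3.10 of \cite{tchecos} and Proposition 6.1 of \cite{mhd}) for the case $2^{\aleph_0}<2^{\aleph_1}$. The cardinal arithmetic you give for the Cohen model is also the one the paper relies on, so nothing further is needed.
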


\esple

The previous result is quite interesting if one recalls that $\diamondsuit(^\omega \omega, =)$ is, in fact, Jensen's diamond $\diamondsuit$,
as already remarked.

\section{The Main Theorem}

\esple

In this section we prove that, under $\diamondsuit(^\omega \omega\,, <)$, selectively $(a)$-spaces from almost disjoint families are necessarily countable. 

The following combinatorial characterization of the selective version of property $(a)$ for $\Psi$-spaces was recently established by the second author
(\cite{recente}).

\esple

\begin{prop} [{\cite{recente}}] \label{charac} Let $\ad = \{A_\alpha: \alpha < \kappa\}  \s [\omega]^\omega$ be an a. d. family of size $\kappa$. The corresponding space $\psia$ is selectively $(a)$ if, and only if, the following property holds: for every sequence $\pe f_n: n < \omega\pd$ of functions  in $\,^{\kappa}\omega$, there is a sequence $\pe P_n: n < \omega \pd$ of subsets of $\omega$ satisfying both of the following clauses:

\esple

$(i)$ $(\forall n < \omega)(\forall \alpha < \kappa)[ |P_n \cap A_\alpha| < \omega]$

$(ii)$ $(\forall \alpha <  \kappa)(\exists n < \omega)[P_n \cap A_\alpha  \not\s f_n(\alpha) ]$. \fdl

\esple

\end{prop}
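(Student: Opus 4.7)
My plan is to establish both directions by setting up a clean correspondence between basic open covers of $\psia$ and functions in $^\kappa \omega$. The key preliminary facts I would use are: (a) a set $X \s \omega$ is closed and discrete in $\psia$ iff $|X \cap A_\alpha| < \omega$ for every $\alpha < \kappa$, so clause $(i)$ simply says each $P_n$ is closed discrete; (b) every dense subset of $\psia$ contains the set $\omega$ of isolated points, so without loss of generality $D = \omega$; and (c) the basic neighbourhoods of $A_\alpha$ have the form $U_\alpha^{k} = \{A_\alpha\} \cup (A_\alpha \setminus k)$ with $k < \omega$, and every open cover admits a basic refinement of the shape $\{\{m\}: m \in \omega\} \cup \{U_\alpha^{k_\alpha} : \alpha < \kappa\}$, neatly encoded by the function $\alpha \mapsto k_\alpha$ in $^\kappa \omega$.

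For the forward direction (selectively $(a)$ implies $(i)$ and $(ii)$), given a sequence $\pe f_n: n < \omega \pd$ in $^\kappa \omega$, I would form the open covers $\mathcal{U}_n = \{\{m\}: m \in \omega\} \cup \{U_\alpha^{f_n(\alpha)}: \alpha < \kappa\}$ and apply selectively $(a)$ with dense set $D = \omega$. The resulting sets $A_n \s \omega$ give clause $(i)$ at once, by fact (a). For clause $(ii)$, note that the only members of $\mathcal{U}_n$ containing the point $A_\alpha$ are those of the form $U_\alpha^{f_n(\alpha)}$; hence $A_\alpha \in St(A_n, \mathcal{U}_n)$ is equivalent to $A_n \cap (A_\alpha \setminus f_n(\alpha)) \neq \vaz$, i.e.,  $A_n \cap A_\alpha \not\s f_n(\alpha)$. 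Setting $P_n = A_n$ finishes the direction.

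For the converse, given a sequence $\pe \mathcal{U}_n \pd$ of open covers and a dense set $D$, I would pass to a basic refinement of each $\mathcal{U}_n$ (safely, because stars relative to a refinement are contained in the stars of the original cover) and reduce to $D = \omega$. For each $\alpha$ and $n$, I pick a basic neighbourhood $U_\alpha^{F_n^\alpha}$ of $A_\alpha$ appearing in the refined $\mathcal{U}_n$ and set $f_n(\alpha) = \max F_n^\alpha + 1$ (so $F_n^\alpha \s f_n(\alpha)$). The combinatorial hypothesis yields $\pe P_n \pd$ satisfying $(i)$ and $(ii)$. To also cover the isolated points, I would enlarge each $P_n$ by a single point, setting $A_n = P_n \cup \{n\}$: clause $(i)$ is preserved since adding one isolated point increases each $|A_n \cap A_\alpha|$ by at most one, and for any $m \in \omega$, since $\mathcal{U}_m$ covers $\psia$ there is some $U \in \mathcal{U}_m$ with $m \in U$, so $m \in U \cap A_m$ and hence $m \in St(A_m, \mathcal{U}_m)$. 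For $A_\alpha \in \ad$, clause $(ii)$ furnishes an $n$ with $A_n \cap A_\alpha \supseteq P_n \cap A_\alpha \not\s f_n(\alpha) \supseteq F_n^\alpha$, so $U_\alpha^{F_n^\alpha} \cap A_n \neq \vaz$ and $A_\alpha \in St(A_n, \mathcal{U}_n)$.

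The main delicate point is the bookkeeping in the translation between finite subsets $F$ of $\omega$ (which parametrize basic neighbourhoods) and natural numbers $k$ (identified with the ordinal $\{0, \ldots, k - 1\}$), ensuring that each $f_n$ is a well-defined function in $^\kappa \omega$ and that the reduction to basic refinements genuinely preserves the star-covering assertion in the direction required. Once these reductions are organized, the equivalence becomes essentially a restatement of the combinatorics of neighbourhoods in Mr\'owka--Isbell spaces.
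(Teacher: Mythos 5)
Your argument is correct. Note that the paper does not actually prove Proposition \ref{charac}; it is quoted from \cite{recente} without proof, so there is no in-paper argument to compare against. Your proof is the natural one and checks out: the translation between functions $f_n \in {}^{\kappa}\omega$ and basic open covers $\{\{m\}: m \in \omega\} \cup \{\{A_\alpha\} \cup (A_\alpha \setminus f_n(\alpha)): \alpha < \kappa\}$, the observation that closed discrete subsets of $\omega$ are exactly those meeting each $A_\alpha$ finitely, the reduction to the dense set $D = \omega$, and the passage to basic refinements (with stars only shrinking) are all sound, and the trick of adding the single point $n$ to $P_n$ to absorb the isolated points is exactly what is needed to recover full star-coverage in the converse direction.
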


In the view of the preceding characterization, the following theorem shows that the effective diamond principle $\diamondsuit(^\omega \omega\,, <)$ indeed avoids the existence
of a selectively $(a)$  a.d. family of size $\aleph_1$ in a very strong way, since, for any given candidate a.d. family $\ad$, 
a sequence $\pe g_n: n < \omega\pd$ of functions  in $^{\omega_1} \omega$ 
	will be exhibited 
	in such a way that, considering any given sequence  $\pe P_n: n < \omega \pd$ of subsets of $\omega$, then either the first or the second clause above will have stationarily many counterexamples.

\esp

\begin{thm} \label{main} $\diamondsuit(^\omega \omega,<)$ implies that for every a.d. family $\ad = \{A_\alpha: \alpha < \omega_1\}$ there is a sequence $\pe g_n: n < \omega\pd$ of functions in 
$^{\omega_1} \omega$ such that, for every sequence $\pe P_n: n < \omega \pd$ of subsets of $\omega$, 

\esple

\begin{align*}
\hbox{either\,\,}&\{\alpha<\omega_1: \exists n < \omega \,\,\hbox {such that} \,\,  [P_n \cap A_{\alpha}\,\hbox{is infinite}]\} \\
\hbox{ or }&\{\alpha<\omega_1: \forall n < \omega \,\, [P_n \cap A_{\alpha}\subseteq g_n(\alpha)]\}
\end{align*}

\esple

\n is a stationary subset of $\omega_1$.

\end{thm}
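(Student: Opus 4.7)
The plan is to apply the equivalent formulation $\diamondsuit({}^\omega(\ptes(\omega)),{}^\omega\omega,E)$ from Proposition \ref{equivalencia}. I will design a level-by-level Borel map $F:{}^{<\omega_1}2\rightarrow {}^\omega(\ptes(\omega))$ in such a way that, for any prospective sequence $\pe P_n:n<\omega\pd$ coded along the first $\omega$ coordinates of a branch $f\in {}^{\omega_1}2$, the value $F(f\res\alpha)$ will literally be $\pe P_n\cap A_\alpha: n<\omega\pd$ for each $\alpha\mai\omega$. The oracle $g:\omega_1\rightarrow {}^\omega\omega$ produced by the diamond will yield the desired sequence via $g_n(\alpha):=g(\alpha)(n)$.

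First I fix the a.d. family $\ad=\{A_\alpha:\alpha<\omega_1\}$ and a bijection $\sigma:\omega\rightarrow\omega\times\omega$. For $s\in{}^\alpha 2$ with $\alpha\mai\omega$, I decode $s\res\omega$ into $\pe P_n^s:n<\omega\pd$ by putting $P_n^s=\{k<\omega : s(\sigma^{-1}(n,k))=1\}$, and define
\[
F(s)\;=\;\pe P_n^s\cap A_\alpha: n<\omega\pd.
\]
For $\alpha<\omega$ the value of $F(s)$ is irrelevant (take, say, the constant $\vaz$ sequence). For each fixed $\alpha$, both the decoding and the pointwise intersection with the fixed set $A_\alpha$ are continuous operations between Polish spaces, so $F\res{}^\alpha 2$ is Borel; I will relegate any finer bookkeeping of this to Section~4, as the authors signal.

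Now apply $\diamondsuit({}^\omega(\ptes(\omega)),{}^\omega\omega,E)$ to $F$, obtaining an oracle $g:\omega_1\rightarrow{}^\omega\omega$; set $g_n(\alpha)=g(\alpha)(n)$. Given any sequence $\pe P_n:n<\omega\pd$ of subsets of $\omega$, choose any $f\in{}^{\omega_1}2$ whose first $\omega$ coordinates code $\pe P_n\pd$ via $\sigma$ (extending arbitrarily on $[\omega,\omega_1)$). The diamond provides a stationary
\[
S\;=\;\{\alpha<\omega_1: F(f\res\alpha)\,E\,g(\alpha)\},
\]
and since $[\omega,\omega_1)$ is club, $S':=S\cap[\omega,\omega_1)$ is still stationary. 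For every $\alpha\in S'$, the definition of $E$ forces either $(i)_\alpha$: some $P_n\cap A_\alpha$ is infinite, or $(ii)_\alpha$: $P_n\cap A_\alpha\subsetneq g_n(\alpha)$ for every $n<\omega$ (where $g_n(\alpha)$ is identified with the von Neumann natural $\{0,\dots,g_n(\alpha)-1\}$).

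Partition $S'=S'_1\cup S'_2$ according to whether $(i)_\alpha$ or $(ii)_\alpha$ holds (if both, place $\alpha$ in $S'_1$). Since a stationary set cannot be the union of two non-stationary sets, at least one of $S'_1,S'_2$ is stationary. The first alternative exhibits the set $\{\alpha<\omega_1:\exists n\,[P_n\cap A_\alpha \text{ is infinite}]\}$ as stationary, and the second, using $\subsetneq\,\Rightarrow\,\subseteq$, exhibits $\{\alpha<\omega_1:\forall n\,[P_n\cap A_\alpha\s g_n(\alpha)]\}$ as stationary, which is exactly the dichotomy demanded by the theorem. The main obstacle is not conceptual but bureaucratic: checking that $F$ is genuinely level-by-level Borel in the sense of Definition~\ref{borel}(iv), which is the sort of routine verification deferred to Section~4.
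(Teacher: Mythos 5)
Your proof is correct and follows essentially the same route as the paper's: the same guessing function $F(s)=\pe P_n^s\cap A_\alpha : n<\omega\pd$ (the paper uses an abstract Borel bijection ${}^\omega 2\rightarrow{}^\omega(\ptes(\omega))$ where you use an explicit pairing $\sigma$), the same oracle and decomposition $g_n(\alpha)=g(\alpha)(n)$, and the same reading of the relation $E$ on the stationary set $S\cap[\omega,\omega_1)$. The only (cosmetic) difference is the endgame: the paper assumes the first alternative is non-stationary and intersects the guessing set with a club contained in its complement, whereas you partition the guessing set according to which disjunct of $E$ holds and note that one piece must be stationary --- both finishes are equally valid.
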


\esple

\begin{proof} Topologically, $^\omega(\ptes(\omega))$ is the same as $^\omega(^\omega 2)$, which is homeomorphic to $^\omega 2$ -- so we may fix an enumeration $\{ X_f: f \in$ $^\omega 2\}$ of $^\omega(\ptes(\omega))$ such that the bijection $f \mapsto X_f$ is Borel. For any $f \in$ $^\omega 2$, the sequence $X_f$ of subsets of $\omega$ will be denoted as $\pe X_f(n): n < \omega\pd$.

Let $\ad = \{A_\alpha: \alpha < \omega_1\}$ be an a.d. family and  $F:$$^{<\omega_1} 2\rightarrow\,\, ^\omega(\ptes(\omega))$ be defined in such a way that, for every $h \in$ $^{< \omega_1}2$ and for every $n < \omega$,

\esple

\begin{center}

$ F(h)(n)= 
\left\{ 
\begin{array}{ll} 
A_{dom(h)} \cap X_{h \res \omega}(n) & \mbox {if $dom(h) \mai \omega$; and} \\ 
0 & \mbox {otherwise.} \end{array} \right. $

\end{center}

\esple

By Proposition \ref{equivalencia},  our hypothesis $\diamondsuit(^\omega\omega,<)$ is equivalent to the  principle  $\diamondsuit(^\omega (\ptes (\omega)),$  $^\omega \omega, E)$, so we may consider a function $g: \omega_1 \rightarrow$ $^\omega \omega$ which is an oracle for $F$ given by  $\diamondsuit(^\omega (\ptes (\omega)),$  $^\omega \omega, E)$.
We use the oracle $g$ for defining a sequence $\pe g_n: n < \omega\pd$ of functions in $^{\omega_1} \omega$ in the natural way: for every $n < \omega$ and for every $\alpha < \omega_1$, set $g_n(\alpha) = g(\alpha)(n)$.

Let $P = \pe P_n: n < \omega \pd$ be an arbitrary sequence of subsets of $\omega$. If $\{\alpha<\omega_1: \exists n < \omega \,\,\hbox {such that} \,\,  [P_n \cap A_{\alpha}\,\hbox{is infinite}]\}$ is not stationary, its complement
$\{\alpha<\omega_1: \forall  n < \omega \,\,  [P_n \cap A_{\alpha}\,\hbox{is finite}]\}$ includes a club, say $C$. Let $f: \omega_1 \rightarrow 2$ be any function such that $X_{f\res \omega} = P$. As $g$ is an oracle,  the set 

\esple
\bc

$S = \{\alpha < \omega_1: F(f \res \alpha) E g(\alpha)\}$

\ec
\esple

\n is stationary in $\omega_1$. But then the set $S \cap [\omega,\omega_1[$ is also stationary, and notice that

\esple

\bc

$S \cap [\omega,\omega_1[ = \{\omega \meni \alpha < \omega_1: F(f \res \alpha) E g(\alpha)\}$

$= \{\omega \meni \alpha < \omega_1: \pe A_\alpha \cap P_n: n < \omega\pd E g(\alpha)\}$.

\ec

\esple

Combining  the definition of the relation $E$ and  that of the sequence of functions $\pe g_n: n < \omega\pd$, it turns out that the set $S \cap [\omega,\omega_1[$ is given by

\esple

\bc

$\{\omega \meni \alpha < \omega_1: [ \exists m < \omega \,\,(|A_\alpha \cap P_m)| = \aleph_0)] \vee [\forall n < \omega\,\, (A_\alpha \cap P_n \subsetneq g_n(\alpha))]\}$.

\ec

\esple

Finally, recall that there is a club set $C$ included in 

\bc $\{\alpha<\omega_1: \forall  n < \omega \,\,  [P_n \cap A_{\alpha}\,\hbox{is finite}]\}$. \ec

 Then $S \cap [\omega,\omega_1[ \cap C$ is stationary, and the desired conclusion follows by the fact that $S \cap [\omega,\omega_1[ \cap C \s \{\alpha< \omega_1:  \forall n < \omega \,\, [P_n \cap A_{\alpha}\subsetneq g_n(\alpha)]\}$. \end{proof}

\esple

For every space $\psia$ one has $|\psia| = |\ad|$, and it is also clear that if $\ad$ is a selectively $(a)$ a.d. family then the same holds for any $\ad' \s \ad$. Therefore we have the following

\esple

\begin{cor} \label{corolariomain} Selectively $(a)$-spaces from   almost disjoint families are   necessarily countable under $\diamondsuit(^\omega\omega,<)$.\fdl

\end{cor}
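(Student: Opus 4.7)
The plan is to derive the corollary by a short proof by contradiction, combining Theorem \ref{main} with the combinatorial characterization of selective $(a)$-ness for $\Psi$-spaces given in Proposition \ref{charac}, together with the easy fact (recorded just before the corollary) that any subfamily of a selectively $(a)$ a.d. family is again selectively $(a)$. Accordingly, it suffices to rule out selectively $(a)$ a.d. families of cardinality exactly $\aleph_1$, since any uncountable one would contain such a subfamily (just enumerate $\aleph_1$ many of its members). This reduction is essential, because Theorem \ref{main} is phrased for a.d. families indexed by $\omega_1$.

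So I would assume, for contradiction, that $\ad = \{A_\alpha : \alpha < \omega_1\}$ is a selectively $(a)$ a.d. family. Apply Theorem \ref{main} to $\ad$ to obtain a sequence $\pe g_n : n < \omega\pd$ of functions in $^{\omega_1}\omega$ witnessing the stated stationary dichotomy against every possible sequence $\pe P_n : n < \omega\pd$ of subsets of $\omega$. Then feed the \emph{same} sequence $\pe g_n : n < \omega\pd$ into the characterization supplied by Proposition \ref{charac}: since $\ad$ is selectively $(a)$, this yields a sequence $\pe P_n : n < \omega\pd$ of subsets of $\omega$ satisfying both clauses $(i)$ and $(ii)$ of that proposition, relative to the functions $g_n$.

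Finally I would observe that clauses $(i)$ and $(ii)$ simultaneously kill both sides of the Theorem \ref{main} dichotomy: clause $(i)$ forces the set
$\{\alpha < \omega_1 : \exists n < \omega \,[P_n \cap A_\alpha \text{ is infinite}]\}$
to be empty, while clause $(ii)$ forces the set
$\{\alpha < \omega_1 : \forall n < \omega \,[P_n \cap A_\alpha \s g_n(\alpha)]\}$
to be empty as well. Since the empty set is certainly not stationary in $\omega_1$, this contradicts the choice of $\pe g_n : n < \omega\pd$, completing the argument.

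There is no real obstacle here; all the substantive work has already been deposited into Theorem \ref{main} (the diamond-driven construction of the oracle sequence) and Proposition \ref{charac} (the combinatorial translation of selective $(a)$-ness). The only small care needed is the reduction to an $\aleph_1$-sized subfamily, which is immediate from the monotonicity remark made in the excerpt. It is worth noting that the argument intrinsically produces a contradiction at the level $\aleph_1$ rather than appealing to the general bound $|\ad| < \gc$ of Theorem \ref{matveevseletivo}, which is precisely what allows the conclusion to hold even in models where $\ch$ fails.
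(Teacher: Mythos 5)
Your proposal is correct and is essentially the argument the paper intends: the reduction to an $\aleph_1$-sized subfamily via the hereditary remark preceding the corollary, followed by playing Theorem \ref{main} against the characterization in Proposition \ref{charac} (with $f_n = g_n$), whose clauses $(i)$ and $(ii)$ make both sets of the dichotomy empty and hence non-stationary. No discrepancy with the paper's (largely implicit) proof.
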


It follows that the statement ``all selectively $(a)$ $\Psi$-spaces are countable"\, is consistent with $\neg \mathbf{CH}$.

\section{All functions in this paper are Borel}

\esple

As commented in the first section, all functions in this paper are Borel, with routine verifications. Intuitively, functions between Borel
subsets of Polish spaces  which are explicitly defined
in terms of standard  set theoretical operations as ``unions'', ``intersections'' and ``taking minima'' (or ``maxima" ) are Borel. For the convenience of the readers who
have  never worked in this context (for instance, topologists with no previous interests  in Descriptive Set Theory),  we include here a verification that the most important function of this paper - the function $F$ of Theorem \ref{main} - is Borel.

So, fix $\alpha \mai \omega$ and consider the Polish space \,$^\alpha 2$ (which is homeomorphic to the well-known Cantor set) and let $F_\alpha:\, ^\alpha 2 \rightarrow$ $^\omega(\ptes(\omega))$ be the restriction of $F$ to $2^\alpha$. As Polish spaces are spaces with a countable base, it suffices to check that
the inverse images of subbasic open sets are Borel.

Writing $F$ as a function from $^{< \omega_1} 2$ into $^\omega(^\omega 2)$, the expression of $F(h)$ is as follows: for every $n < \omega$, 

\begin{center}

$ F(h)(n)= 
\left\{ 
\begin{array}{ll} 
\chi(A_{dom(h)} \cap X_{h \res \omega}(n))  & \mbox {if $dom(h) \mai \omega$; and} \\ 
0 & \mbox {otherwise.} \end{array} \right. $

\end{center}

\n where $\chi(Y)$ denotes (of course) the characteristic function of $Y$, whenever $Y \s \omega$.

For any $m < \omega$ and $i < 2$,  $[\{\pe m,i\pd\}]$ denotes the canonical subbasic open set of $2^\omega$ given by $\{f \in$ $^\omega 2: f(m) = i\}$
and, for any $n < \omega$, let $S_{n,[\{\pe m,i\pd\}]}$ denote the canonical subbasic open set of $^\omega(2^\omega)$ given by

\bc

$S_{n,[\{\pe m,i\pd\}]} = \{f \in$ $^\omega(^\omega 2): f(n)(m) = i\}$.

\ec

\esple

It follows that

\begin{align*}
F_\alpha^{-1}[S_{n,[\{\pe m,i\pd\}]}] 
& = \{h \in\,^\alpha 2: F(h) \in S_{n,[\{\pe m,i\pd\}]}\} \\
& = \{h \in\,^\alpha 2: F(h)(n)(m) = i\}.
\end{align*}

In case of $i = 1$, the preceding set is 

\bc 

$\{h \in$ $^\alpha 2: m \in A_\alpha \cap X_{h \res \omega}(n)\}$

\ec

\n and in case of $i = 0$ one has just to replace ``$\in$'' by ``$\notin$''. As these sets are complementary we have just to check that one
of them is Borel.

So, for arbitrary and fixed  natural numbers $m$ and $n$, consider the set given by
$\{h \in$ $^\alpha 2: m \in A_\alpha \cap X_{h \res \omega}(n)\}$. As $A_\alpha$ is fixed since the beginning, this set is empty in the case of $m \notin A_\alpha$. So, our ``real set of interest''(the one we have to really check that it is Borel)  is

\bc

$Y_{m,n} = \{h \in$ $^\alpha 2: m \in X_{h \res \omega}(n)\}$

\ec

\n for arbitrary $m,n < \omega$. But this set may be written as $\xi^{-1}[Z_m]$, where 

\bc $Z_m = \{Y \s \omega: m \in Y\}$ \ec

\n  and $\xi:$ $^\alpha 2 \rightarrow \ptes(\omega)$ is given by $\xi = \zeta \circ \beta \circ \gamma$, where

\begin{itemize}

\item $\gamma:$ $^\alpha 2 \rightarrow$ $^\omega 2$ is the restriction to $\omega$, i.e., $\gamma(h) = h \res \omega$ for every $h \in$ $^\alpha 2$;
$\gamma$ is continuous.

\item $\beta:$ $^\omega 2 \rightarrow$ $^\omega(\ptes(\omega))$ is the Borel bijection fixed in the proof, i.e., $\beta(f) = X_f$ for every $f \in$ $^\omega 2$. Recall that, in fact, $\beta$ could be chosen
 as a homeomorphism. 

\item $\zeta:$ $^\omega(\ptes(\omega)) \rightarrow \ptes(\omega)$ is the continuous function given by $\zeta(s) = s(n)$ for every $s \in$ $^\omega(\ptes(\omega))$.

\end{itemize}

It follows that $\xi$ is Borel. As $Z_m$ is open in $\ptes(\omega)$ (when identified with  $^\omega 2$), the verification is finished.

Of course, for exhausting the process of checking the  conditions of Definition \ref{borel} it is also necessary verifying that the objects $(^\omega \omega,$$ ^\omega \omega, <)$ and  $(^\omega (\ptes (\omega)),$  $^\omega \omega, E)$ are  Borel. These verifications are easier; let us present  the first one.  Notice that we only have to decide, after fixing $m < \omega$, if the set $X_m = \{\pe f,g\pd: f(m) < g(m)\}$ is Borel, since the relation $<$ in $^\omega \omega \times\, ^\omega \omega$ is given by the countable intersection $< \,\, = \bigcap\limits_{m < \omega} X_m$. Note also that, for any $m < \omega$ one has

\bc

$X_m = \bigcup\limits_{k < l < \omega} \{\pe f,g \pd: f(m) = k$  and  $g(m) = l\}$

\ec

\n and therefore it suffices to check that $Y_{k,l} = \{\pe f,g \pd: f(m) = k$  and  $g(m) = l\}$ is Borel for any fixed $k$ and $l$. But this set is, indeed, an   open set, since it is
the pre-image of the isolated point $\pe k,l \pd$ (of the discrete space $\omega \times \omega$)  under  the continuous function $\varphi:$\,\,$^\omega \omega \times$$^\omega \omega \rightarrow \omega \times \omega$ given by
$\varphi(f,g) = \pe f(m), g(m) \pd$. This ensures that the relation $<$ is a $G_\delta$ set, since each $X_m$ is an open set.\footnote{The referee noticed that, in fact, the relation $<$ is closed -- because each $X_m$ is a clopen set. To see this, let $\pe f_n, g_n \pd$ be a sequence in $X_m$ converging to some $\pe f,g \pd$. There is a natural number $n_0$ such that $f_n(m) = f(m)$ and $g_n(m) = g(m)$ for all $n \mai n_0$. Thus necessarily $f(m) < g(m)$ and $\pe f,g \pd \in X_m$.}

\section{Notes,  Questions and Problems}

\esple

As remarked in the first section (see Theorem \ref{matveevseletivo}), there are no 
selectively $(a)$ a.d. families of size $\gc$; on the other hand, countable a.d. families are associated to metrizable $\Psi$-spaces, so if $\ad$ is countable then $\psia$ is paracompact and therefore it is $(a)$ (thus, selectively $(a)$). Considering all, the
 cardinal invariants  we introduce below  are both uncountable and not larger than $\gc$.

\esple

\begin{dfn} The cardinal invariants  $\nssa$ and $\vssa$  are defined in the following way:

\esple

$\nssa = \min\{|\ad|: \ad$ is not selectively $(a) \}$; and

$\vssa = \min\{\kappa:$ if  $|\ad| = \kappa$, then $\ad$ is not selectively $(a)\}$. \fdl

\esple

\end{dfn}

The cardinal $\nssa$ is a ``non'' cardinal invariant and $\vssa$   is a ``never'' cardinal invariant, in the sense of \cite{tchecos} -- where  we have defined $\vsa$ as being the least $\kappa$ such that no $\ad$ of size $\kappa$ is $(a)$ (so,  $\vsa \meni \vssa$), $\vn$ as being the least $\kappa$ such that no $\ad$ of size $\kappa$ is normal and $\vcp$ as the least $\kappa$ such that no $\ad$ of size $\kappa$ is countably paracompact.  $\vn$ and $\vsa$ are well defined, respectively,  because of  Jones' Lemma (\cite{jones})  and Matveev's $(a)$-version of Jones Lemma (\cite{matv}) (in fact, Theorem \ref{matveevseletivo} (from \cite{recente}) is a kind of selective version of Matveev's referred result); and $\vcp$ is well defined because of results due to Fleissner (it is proved in \cite{fle} that countably paracompact 
separable spaces cannot include closed discrete subsets of size $\gc$). 

Let us summarize the known $\zfc$  inequalities involving these cardinals. In what follows, $\nsa$ is the least $\kappa$ such that there is an a.d. family which is not $(a)$ (see  \cite{sze})\footnote{The original definition of $\nsa$ was done in terms of \emph{soft almost disjoint families}, but the
definitions are equivalent since an a.d. family is $(a)$ if and only if every finite modification of $\ad$ is soft. Again, see \cite{sze} for details.}.

\esple

\begin{fato} The following inequalities hold in $\zfc$:

\esple

$(i)$  $\gp \meni \nsa \meni \gb \meni \ga$; 

$(ii)$  $\gb \meni \gd \meni \nssa \meni \vssa$;

$(iii)$  $\vsa \meni \vssa$;  and

$(iv)$ $\vn \meni \vcp$. \fdl

\end{fato}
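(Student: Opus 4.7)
The plan is to treat each of the four inequalities separately, combining classical cardinal invariant results with the combinatorial characterisation of selective $(a)$ given in Proposition \ref{charac}.

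For part $(i)$, the chain $\gp \meni \nsa \meni \gb \meni \ga$ breaks into three steps. The inequality $\gp \meni \nsa$ is the Szeptycki--Vaughan argument (\cite{sze}): given an a.d.\ family $\ad$ with $|\ad|<\gp$, a basic open cover of $\psia$ and the dense set $\omega$, the pseudo-intersection property for families of size $<\gp$ yields, by a diagonal construction, a closed discrete subset of $\omega$ witnessing Property $(a)$. For $\nsa \meni \gb$ I would exhibit an a.d.\ family of size $\gb$ that fails $(a)$: fix an unbounded family $\{f_\alpha: \alpha<\gb\}$ of strictly increasing functions in $^\omega\omega$ and, under a bijection $\omega \times \omega \cong \omega$, take $\ad = \{\{(n,f_\alpha(n)): n<\omega\}: \alpha<\gb\}$. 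Unboundedness then forbids the construction of a closed discrete set of isolated points witnessing $(a)$. Finally, $\gb \meni \ga$ is the classical fact that every MAD family has size at least $\gb$, proved by the standard diagonalisation argument.

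For part $(ii)$, the inequality $\gb \meni \gd$ is a standard $\zfc$ fact. The substantive step is $\gd \meni \nssa$, which I would establish directly through Proposition \ref{charac}. Fix $\ad = \{A_\alpha: \alpha<\kappa\}$ with $\kappa<\gd$ and a sequence $\pe f_n: n<\omega\pd$ of functions in $^\kappa \omega$. Enumerating each $A_\alpha$ increasingly as $\{a^\alpha_k: k<\omega\}$, for each $\alpha$ define $h_\alpha \in\,^\omega\omega$ by $h_\alpha(n) = \min\{a \in A_\alpha: a \mai f_n(\alpha)\}$. Since $|\{h_\alpha: \alpha<\kappa\}| \meni \kappa < \gd$, this family is not dominating, so pick $g\in\,^\omega\omega$ such that, for every $\alpha$, the set $\{n<\omega: g(n) > h_\alpha(n)\}$ is infinite. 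Setting $P_n = \{k<\omega: k \meni g(n)\}$ makes each $P_n$ finite, so clause $(i)$ of the characterisation holds trivially; and for any $\alpha$, any $n$ with $g(n)>h_\alpha(n)$ places $h_\alpha(n) \in A_\alpha$ into $P_n$ above $f_n(\alpha)$, giving clause $(ii)$. The remaining inequality $\nssa \meni \vssa$ follows immediately from the definitions, since at $\kappa = \vssa$ every a.d.\ family of size $\kappa$ is, in particular, non-selectively-$(a)$.

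Parts $(iii)$ and $(iv)$ both rest on the general monotonicity principle: if $\ptes_1$ and $\ptes_2$ are topological properties with $\ptes_1 \Rightarrow \ptes_2$, then for any cardinal $\kappa$, ``every a.d.\ family of size $\kappa$ fails $\ptes_2$'' implies ``every a.d.\ family of size $\kappa$ fails $\ptes_1$'', whence the ``never'' cardinal for $\ptes_1$ is at most the one for $\ptes_2$. For $(iii)$ this is applied to $(a)\Rightarrow$ selectively $(a)$, giving $\vsa\meni\vssa$; for $(iv)$, one applies it to the known fact that for $\Psi$-spaces normality entails countable paracompactness, yielding $\vn\meni\vcp$. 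The only step requiring nontrivial combinatorial work is $\gd \meni \nssa$: the obstacle is extracting from a single escape function $g$ a single sequence $\pe P_n\pd$ which, across the uncountable index set $\kappa$, simultaneously satisfies both clauses of Proposition \ref{charac}; all other steps are classical, cited from \cite{sze}, or routine from the definitions.
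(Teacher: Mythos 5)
Your overall decomposition matches the paper's: the paper states this Fact without proof, justifying each link by a citation ($\gb \meni \ga$ and $\gb \meni \gd$ from \cite{vD}; $\gp \meni \nsa$ from \cite{szepvau}; $\nsa \meni \gb$ from Brendle, Brendle--Yatabe and Szeptycki via \cite{sze}; $\gd \meni \nssa$ from \cite{recente}; $\vn \meni \vcp$ from \cite{QA}), together with the same routine monotonicity observations you use for $\nssa \meni \vssa$ and $\vsa \meni \vssa$. Your direct argument for $\gd \meni \nssa$ via Proposition \ref{charac} is correct and is essentially the proof from \cite{recente}: the functions $h_\alpha(n)=\min\{a\in A_\alpha : a \mai f_n(\alpha)\}$, a witness $g$ to the failure of domination, and the finite sets $P_n=[0,g(n)]$ do verify both clauses. (Minor point: $\gp \meni \nsa$ is due to Szeptycki and Vaughan, i.e.\ \cite{szepvau}, not \cite{sze}.)

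The one genuine gap is your treatment of $\nsa \meni \gb$. First, the family $\{\{(n,f_\alpha(n)) : n<\omega\} : \alpha<\gb\}$ need not be almost disjoint: two distinct strictly increasing functions can agree on an infinite set, so their graphs can have infinite intersection. The standard object here is the branching family $\{\{f_\alpha\res n : n<\omega\} : \alpha<\gb\}$ inside the countable set $^{<\omega}\omega$ (or the graphs of a $<^*$-increasing sequence), which is automatically a.d. Second, and more seriously, ``unboundedness forbids the construction of a closed discrete set witnessing $(a)$'' is not a proof. Unboundedness of $\{f_\alpha : \alpha<\gb\}$ only yields, for each candidate bound $g$, some $\alpha$ with $f_\alpha(n)>g(n)$ for infinitely many $n$, whereas to defeat Property $(a)$ you must exhibit a neighbourhood assignment defeating \emph{every} closed discrete $F\s\omega$ --- and, for instance, a single full column of $\omega\times\omega$ already meets every graph while having finite intersection with each of them, so it is closed discrete. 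The existence of a non-$(a)$ a.d.\ family of size $\gb$ is a nontrivial theorem of Brendle, Brendle--Yatabe and Szeptycki; the paper (like you, everywhere else in this Fact) simply cites it via \cite{sze}, and your sketch does not reconstruct it. Either cite the result outright or supply the actual argument for the branching family.
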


\esple

Notice that we still have very few information on $\vsa$ and $\vcp$.

About the above inequalities: $\gb \meni \ga$ and $\gb \meni \gd$ are well known (see \cite{vD}). Szeptycki and Vaughan have proved in \cite{szepvau}
that a.d. families of size less than $\gp$ satisfy property $(a)$, so\, $\gp \meni \nsa$. The inequality $\nsa \meni \gb$ is due to works of Brendle, Brendle-Yatabe and Szeptycki (see \cite{sze}). 
And the second author of the present paper proved in \cite{recente} that a.d. families of size strictly less than $\gd$ are selectively $(a)$, and therefore
$\gd \meni \nssa \meni \vssa$. The display of the above inequalities left clear that
if $\gb < \gd$ then $\nsa < \nssa$, i.e., if $\gb < \gd$ then there are selectively $(a)$, non-$(a)$ a.d. families. Notice also that $\gb < \gd$ is consistent with $2^{\aleph_0} < 2^{\aleph_1}$, since both inequalities hold after adding $\aleph_{\omega_1}$ Cohen reals to a model of $\mathbf{GCH}$ (see details in \cite{recente}). $\vn \meni \vcp$ holds because normal $\Psi$-spaces are countably paracompact (\cite{QA}).

Szeptycki and Vaughan have proved  that normal
$\Psi$-spaces of size less than $\gd$ are $(a)$-spaces (\cite{szepvau}). Because of this, if  $\vn \meni \gd$ then one has  $\vn \meni \vsa \meni \vssa$.

In \cite{tchecos}, we have presented a problem (inspired by the upper bounds for $\vn$ in terms of other ``never'' cardinal invariants which are obtained
when assuming $\vn \meni \gd$) of finding upper bounds for $\vsa$, $\vn$ and $\vcp$ in terms of other cardinal invariants such as $\gd$, $\ga$ or $\gb$ (Problem 5.3 of \cite{tchecos}). However, after some time we realized that it is more likely that  those ``never cardinal invariants'' have \emph{lower bounds} given by other cardinal invariants; note that we have just proved $\gd \meni \vssa$\footnote{In a private communication, Michael Hru\v s\'ak also
pointed out to the second author that  these ``never'' cardinals are more likely to have definable lower bounds than upper bounds.}. So, it is better (and probably wiser) to actualize the referred problem.

\esple

\begin{prob} Search for both lower and upper bounds for the ``never'' cardinal invariants, in terms of other known cardinal invariants.\fdl
\end{prob}

\esple

Some results of the present paper can be translated into the language of ``never cardinal invariants''; for instance, Theorem \ref{main} and Corollary \ref{corolariomain} essentially told us the following:

\esple

\begin{thm} $\diamondsuit(^\omega \omega, <)$ implies $\vssa = \aleph_1$.\fdl
\end{thm}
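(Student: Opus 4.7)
The plan is to observe that this theorem is essentially a reformulation of Corollary \ref{corolariomain} in the language of ``never'' cardinal invariants, so the proof will consist of verifying the two inequalities $\vssa \leq \aleph_1$ and $\vssa \geq \aleph_1$ separately.

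For the upper bound $\vssa \leq \aleph_1$, I would invoke directly Corollary \ref{corolariomain}: assuming $\diamondsuit(^\omega\omega,<)$, every selectively $(a)$ almost disjoint family is countable. In particular, no a.d. family of cardinality $\aleph_1$ can be selectively $(a)$. By the definition of $\vssa$ as the least $\kappa$ such that every a.d. family of size $\kappa$ fails to be selectively $(a)$, this gives $\vssa \leq \aleph_1$.

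For the lower bound $\vssa \geq \aleph_1$, I would rely on the remark made at the beginning of Section 5: a countable a.d. family $\ad$ gives rise to a metrizable $\Psi$-space, which is therefore paracompact and hence satisfies property $(a)$; in particular such an $\ad$ is selectively $(a)$. Consequently there certainly exist countable selectively $(a)$ a.d. families, so $\vssa$ cannot be $\aleph_0$. Since $\vssa$ is a cardinal, this forces $\vssa \geq \aleph_1$.

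Combining the two bounds yields $\vssa = \aleph_1$. There is no real obstacle here: the substantive work has already been carried out in the proof of Theorem \ref{main} and in the observation about paracompactness of $\Psi$-spaces arising from countable a.d.\ families, and the present theorem only repackages those facts in the cardinal-invariant notation introduced in Section 5.
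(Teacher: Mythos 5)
Your proof is correct and matches the paper's (implicit) argument: the paper presents this theorem as a direct translation of Theorem \ref{main} and Corollary \ref{corolariomain} into the cardinal-invariant language, with the lower bound $\aleph_1 \meni \vssa$ coming from the observation at the start of Section 5 that countable a.d. families yield metrizable, hence paracompact, hence (selectively) $(a)$ $\Psi$-spaces. You have simply made explicit the two inequalities that the paper leaves to the reader.
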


\esple

It is worthwhile remarking that in \cite{recente} the second author has proved that $2^{\aleph_0} < 2^{\aleph_1}$ alone does not imply $\vssa = \aleph_1$ (Proposition 5.2 of \cite{recente}). Recall that, as a consequence of Jones' Lemma, $2^{\aleph_0} < 2^{\aleph_1}$ implies $\vn = \aleph_1$. It is still an open question whether $2^{\aleph_0} < 2^{\aleph_1}$ alone  implies $\vcp = \aleph_1$ or $\vsa = \aleph_1$ (Question 5.4 of \cite{tchecos}).
 The effective parametrized weak diamond principle
$\diamondsuit(\omega,<)$ does imply $\vsa = \vcp = \aleph_1$ (Propositions 4.1 and 4.3 of \cite{tchecos}).

It is asked in \cite{recente} if is it consistent that there is an a.d. family of size $\gd$ such that $\psia$ is selectively $(a)$\footnote{It is consistent that there is an a.d. family of size $\gp$ which is $(a)$ (\cite{szepvau}) and there is a $\zfc$ example of an a.d. family of size $\gb$ which is not $(a)$ (\cite{sze}). Notice that there is no way of proving  within $\zfc$ that the latter  a.d. family is not selectively $(a)$, since $\gb < \gd$ is consistent.}.  The following related question was formulated by Rodrigo Dias during a session of the USP Topology Seminar at S\~ao Paulo.

\esple

\begin{que}\label{nssa} Is there a $\zfc$ example of an a.d. family of size $\gd$ such that $\psia$ {\bf is not} a selectively $(a)$-space ? \fdl
\end{que}

\esple

The previous question is related to the problem of searching upper and lower bounds; if the answer is yes, then $\nssa = \gd$ -- i.e, if the previous question has positive answer then one of the ``non'' cardinals would coincide with $\gd$ in $\zfc$. 

\esple

\begin{que}\label{vssa} $\zfc$ proves $\vssa = \gd$  ? \fdl
\end{que}

\esple

Recall that, as just mentioned, the inequality $\gd \meni \vssa$ holds, so $\gd$ is, at least, a lower bound for $\vssa$. The same does not hold for the other ``never"\,  cardinal invariants: as already remarked, $2^{\aleph_0} < 2^{\aleph_1}$ is enough to ensure that $\vn = \aleph_1$, and $\diamondsuit(\omega,<)$ does the same for $\vsa$ and $\vcp$. Both hypotheses ``$2^{\aleph_0} < 2^{\aleph_1}$"\, and ``$\diamondsuit(\omega,<)$"\, are consistent with $\aleph_1 < \gd$. In fact, the conjunction of the three mentioned statements ($\diamondsuit(\omega,<)$,  $2^{\aleph_0} < 2^{\aleph_1}$ and $\aleph_1 < \gd$) holds in the already mentioned Cohen model of the Proposition
3.10 of \cite{tchecos}.

Notice that a positive answer to Question \ref{vssa} would be a strengthening to a positive answer to Question \ref{nssa}.

\esple

We close this paper with the following problem:

\begin{prob} Search for purely combinatorial, non-trivial equivalent definitions for the ``never'' cardinal invariants. \fdl
\end{prob}

\esple

The same question may be formulated for the ``non'' cardinal $\nssa$, but  it was already settled for the other ``non'' cardinals. It is shown in \cite{sze} that $\nsa = \ga \gp$, where $\ga \gp$ is a 
combinatorially defined cardinal. The ``non normal'' and the ``non countably paracompact'' cardinals are both equal to $\aleph_1$, because of Luzin gaps 
(see \cite {hms}).

\esple

\n {\bf Acknowledgements.} The authors acknowledge the anonymous referee for his (or her) careful reading of the manuscript and for a number of   comments, corrections and suggestions which improved the presentation of the paper.

\end{document}